\definecolor {processblue}{cmyk}{0.96,0,0,0}
\theoremstyle{definition}
\algrenewcommand\alglinenumber[1]{
   {\sf\footnotesize\addfontfeatures{Colour=888888,Numbers=Monospaced}#1}}
\algrenewcommand\algorithmicrequire{\textbf{Precondition:}}
\algrenewcommand\algorithmicensure{\textbf{Postcondition:}}
\mathchardef\mhyphen="2D
\renewcommand{\algorithmicrequire}{\textbf{Input:}}
\renewcommand{\algorithmicensure}{\textbf{Output:}}
\newcommand{\BEAS}{\begin{eqnarray*}}
\newcommand{\EEAS}{\end{eqnarray*}}
\newcommand{\BEA}{\begin{eqnarray}}
\newcommand{\EEA}{\end{eqnarray}}
\newcommand{\BEQ}{\begin{equation}}
\newcommand{\EEQ}{\end{equation}}
\newcommand{\BIT}{\begin{itemize}}
\newcommand{\EIT}{\end{itemize}}
\newcommand{\BNUM}{\begin{enumerate}}
\newcommand{\ENUM}{\end{enumerate}}
\newcommand{\BA}{\begin{array}}
\newcommand{\EA}{\end{array}}
\newcommand{\eg}{{\it e.g.}}
\newcommand{\ie}{{\it i.e.}}
\newcommand{\ones}{\mathbf 1}
\newcommand{\reals}{{\mbox{\bf R}}}
\newcommand{\diag}{\mathop{\bf diag}}
\newcommand{\K}{\mathcal{K}}
\newtheorem{theorem}{Theorem}[section]
\long\def\@makecaption#1#2{
   \vskip 9pt
   \begin{small}
   \setbox\@tempboxa\hbox{{\bf #1:} #2}
   \ifdim \wd\@tempboxa > 5.5in
        \begin{center}
        \begin{minipage}[t]{5.5in}
        \addtolength{\baselineskip}{-0.95pt}
        {\bf #1:} #2 \par
        \addtolength{\baselineskip}{0.95pt}
        \end{minipage}
        \end{center}
   \else
    \hbox to\hsize{\hfil\box\@tempboxa\hfil}
   \fi
   \end{small}\par
}
\newcounter{oursection}
\newcounter{lecture}
\title{Stochastic Matrix-Free Equilibration}
\author{Steven Diamond \and Stephen Boyd}
\begin{document}
\maketitle

\begin{abstract}
We present a novel method for approximately equilibrating
a matrix $A \in \reals^{m \times n}$
using only multiplication by $A$ and $A^T$.
Our method is based on convex optimization and projected stochastic
gradient descent, using an unbiased estimate of a gradient
obtained by a randomized method.
Our method provably converges in expectation with an $O(1/t)$
convergence rate and empirically gets good results with a small
number of iterations.
We show how the method can be applied as a preconditioner for
matrix-free iterative algorithms such as LSQR and Chambolle-Cremers-Pock,
substantially reducing the iterations required to reach a given level
of precision.
We also derive a novel connection between equilibration and
condition number, showing that equilibration minimizes an upper bound
on the condition number over all choices of row and column
scalings.
\end{abstract}

\section{Equilibration}
Equilibration refers to scaling the rows and columns of a matrix so the
norms of the rows are the same, and the norms of the columns are the same.
Given a matrix $A \in \reals^{m \times n}$, the goal is to find
diagonal matrices $D \in \reals^{m \times m}$ and $E \in \reals^{n \times n}$
so that the rows of $DAE$ all have $\ell_p$-norm $\alpha$ and the
columns of $DAE$ all have $\ell_p$-norm $\beta$.
(The row and column norm values $\alpha$ and $\beta$
are related by $m\alpha^p = n\beta^p$ for $p < \infty$.)
Common choices of $p$ are $1$, $2$, and $\infty$; in this paper,
we will focus on $\ell_2$-norm equilibration.
Without loss of generality, we assume throughout
that the entries of $D$ and $E$ are nonnegative.

Equilibration has applications to a variety of problems,
including target tracking in sensor networks \cite{HBRSGT:03},
web page ranking \cite{K:08}, and adjusting contingency tables
to match known marginal probabilities \cite{SZ:90}.
The primary use of equilibration, however, is as a heuristic method for
reducing condition number \cite{bradley2010algorithms};
in turn, reducing condition number is a heuristic for speeding
up a variety of iterative algorithms \cite[Chap.~5]{NW:06},
\cite{TJ:14,giselsson2014diagonal}.
Using equilibration to accelerate iterative algorithms is connected to
the broader notion of diagonal preconditioning,
which has been a subject of research for decades;
see, \eg, \cite[Chap.~2]{K:95}, \cite[Chap.~10]{G:97}, \cite{pock2011diagonal,GB:15}.

Equilibration has several justifications as a heuristic for reducing
condition number.
We will show in \S\ref{sec:equil_cond} that if $A$ is square and nonsingular,
any $D$ and $E$ that equilibrate $A$ in the $\ell_2$-norm
minimize a tight upper bound on $\kappa(DAE)$ over all diagonal $D$ and $E$.
Scaling only the rows or only the columns of $A$ so they have the same
$\ell_2$-norms (rather than scaling both at once, as we do here)
also has a connection
with minimizing condition number \cite{sluis1969condition}.

Another perspective is that equilibration minimizes a lower bound
on the condition number.
It is straightforward to show that the ratio between the largest and
smallest $\ell_2$-norms of the columns of $DAE$ is a lower bound on $\kappa(DAE)$:
\begin{align*}
\kappa(DAE) &= \frac{\sup_{\|x\|_2 = 1}\|DAEx\|_2}{\inf_{\|x\|_2 = 1}\|DAEx\|_2} \\
&\geq \frac{\max_{x \in \{e_1,\ldots,e_n \}}\|DAEx\|_2}
{\min_{x \in \{e_1,\ldots,e_n \}}\|DAEx\|_2}.
\end{align*}
The same inequality holds for the ratio between largest and
smallest $\ell_2$-norms of the rows of $DAE$.
For an equilibrated matrix these ratios are one, the smallest they can be.

Equilibration is an old problem and many techniques have been developed for
it, such as the Sinkhorn-Knopp \cite{sinkhorn1967concerning} and
Ruiz algorithms \cite{ruiz2001scaling}.
Existing $\ell_p$-norm equilibration methods
require knowledge of (the entries of) the matrix $|A|^p$, where the function
$|\cdot|^p$ is applied elementwise.
For this reason equilibration cannot be used in matrix-free methods,
which only interact with the matrix $A$ via multiplication of a vector by $A$ or by $A^T$.
Such matrix-free methods play a crucial role in scientific computing
and optimization.
Examples include the conjugate gradients method \cite{HeS:52},
LSQR \cite{PaS:82}, and the Chambolle-Cremers-Pock algorithm \cite{CP:11}.

In this paper we introduce a stochastic matrix-free equilibration
method that provably converges in expectation to the correct $D$ and $E$.
Our method builds on work by Bradley \cite{bradley2010algorithms},
who proposed a matrix-free equilibration algorithm with promising empirical
results but no theoretical guarantees.
Examples demonstrate that our matrix-free equlibration method converges
far more quickly than the theoretical analysis suggests, delivering
effective equilibration in a few tens of iterations, each involving
one multiplication by $A$ and one by $A^T$.
We demonstrate the method on examples of matrix-free iterative algorithms.
We observe that the cost of equilibration
is more than compensated for by the speedup of the iterative algorithm due to
diagonal preconditioning.
We show how our method can be modified to handle variants of the
equilibration problem, such as symmetric and block equilibration.

\section{Equilibration via convex optimization}\label{sec:equil-opt}

\subsection{The equilibration problem}

Equilibration can be posed as the convex
optimization problem \cite{BoV:04}
\begin{equation}\label{prob:equil}
\begin{array}{ll}
\mbox{minimize} & (1/2)
\sum_{i=1}^m\sum_{j=1}^{n} (A_{ij})^2e^{2u_i + 2v_j} - \alpha^2 \ones^Tu - \beta^2 \ones^Tv,
\end{array}
\end{equation}
where $u \in \reals^m$ and $v \in \reals^n$ are the optimization variables \cite{BHT:04}.
The diagonal matrices $D$ and $E$ are obtained via
\[
D = \diag(e^{u_1},\ldots, e^{u_m}), \qquad E = \diag(e^{v_1},\ldots, e^{v_n}).
\]
The optimality conditions for problem (\ref{prob:equil}) are precisely
that $DAE$ is equilibrated, \ie,
\[
|DAE|^2\ones = \alpha^2 \ones, \qquad |EA^TD|^2\ones = \beta^2\ones.
\]
The problem (\ref{prob:equil}) is unbounded below precisely when the matrix $A$
cannot be equilibrated.

Problem (\ref{prob:equil}) can be solved using a variety of methods for
smooth convex optimization \cite{BoV:04,NW:06}.
One attractive method, which exploits the special structure of the
objective, is to alternately minimize over $u$ and $v$.
We minimize over $u$ (or equivalently $D$)  by setting
\[
D_{ii} = \alpha \left(\sum_{j=1}^nA_{ij}^2E_{jj}^2 \right)^{-1/2}, \quad i=1,\ldots,m.
\]
We minimize over $v$ ($E$) by setting
\[
E_{jj} = \beta \left(\sum_{i=1}^mA_{ij}^2D_{ii}^2 \right)^{-1/2}, \quad j=1,\ldots,n.
\]
When $m = n$ and $\alpha = \beta = 1$, the above updates are precisely
the Sinkhorn-Knopp algorithm.
In other words, the Sinkhorn-Knopp algorithm is alternating block minimization
for the problem \eqref{prob:equil}.

\subsection{Equilibration and condition number}\label{sec:equil_cond}
In this subsection we show that equilibrating a square matrix minimizes
an upper bound on the condition number.
We will not use these results in the
sequel, where we focus on matrix-free methods for equilibration.

For $U\in \reals^{n \times n}$ nonsingular define the function $\Phi$ by
\[
\Phi(U) =
\exp\left(\|U\|_F^2 /2 \right)/\det \left(U^TU)^{1/2} \right)
= \exp\left( \sum_{i=1}^n \sigma_i^2/2 \right)/\prod_{i=1}^n \sigma_i,
\]
where $\sigma_1 \geq  \cdots \geq \sigma_n > 0$ are the singular values of $U$.
(Here $\| U \|_F$ denotes the Frobenius norm.)

\begin{theorem}\label{thm:sing-vals}
Let $A$ be square and invertible.
Then diagonal $D$ and $E$ equilibrate $A$, with row and column norms one,
if and only if they minimize $\Phi(DAE)$ over $D$ and $E$ diagonal.
\end{theorem}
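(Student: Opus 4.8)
The plan is to reduce the statement to the optimality conditions for problem \eqref{prob:equil} already recorded above, by showing that minimizing $\Phi(DAE)$ is, up to an additive constant and a logarithm, the very same optimization problem as \eqref{prob:equil} with $\alpha=\beta=1$.

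First I would parametrize $D=\diag(e^{u_1},\dots,e^{u_n})$ and $E=\diag(e^{v_1},\dots,e^{v_n})$. Since $\Phi(DAE)$ depends only on the absolute values of the diagonal entries, we may take $D,E$ positive; any zero diagonal entry makes $DAE$ singular (as $A$ is invertible), so $\det\bigl((DAE)^T DAE\bigr)^{1/2}=0$ and $\Phi(DAE)=+\infty$, and since $\Phi(A)<\infty$ such pairs never minimize $\Phi$. Thus it suffices to minimize over $(u,v)\in\reals^n\times\reals^n$. Taking logarithms — legitimate since $\Phi>0$ and $\log$ is increasing — gives
\[
\log\Phi(DAE)=\tfrac12\|DAE\|_F^2-\log\det\bigl((DAE)^T DAE\bigr)^{1/2}.
\]
Now I would evaluate the two terms: $\|DAE\|_F^2=\sum_{i,j}A_{ij}^2 e^{2u_i+2v_j}$, and, using $\det(DAE)=\det D\,\det A\,\det E=e^{\ones^T u+\ones^T v}\det A$, we get $\log\det\bigl((DAE)^T DAE\bigr)^{1/2}=\log|\det(DAE)|=\ones^T u+\ones^T v+\log|\det A|$. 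Hence
\[
\log\Phi(DAE)=\tfrac12\sum_{i=1}^n\sum_{j=1}^n A_{ij}^2 e^{2u_i+2v_j}-\ones^T u-\ones^T v-\log|\det A|,
\]
which is exactly the objective of \eqref{prob:equil} with $\alpha=\beta=1$, minus the constant $\log|\det A|$.

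It follows that the minimizers of $\Phi(DAE)$ over diagonal $D,E$ coincide with the minimizers of \eqref{prob:equil} with $\alpha=\beta=1$. The objective of \eqref{prob:equil} is smooth and convex in $(u,v)$, so its minimizers are precisely its stationary points; setting the gradient to zero yields $\sum_j A_{ij}^2 e^{2u_i+2v_j}=1$ for all $i$ and $\sum_i A_{ij}^2 e^{2u_i+2v_j}=1$ for all $j$, i.e. $|DAE|^2\ones=\ones$ and $|EA^TD|^2\ones=\ones$ — exactly the statement that $D,E$ equilibrate $A$ with row and column norms one. If no equilibrating pair exists then by the remark after \eqref{prob:equil} the objective is unbounded below, so $\inf\Phi(DAE)=0$ is attained nowhere and the equivalence holds vacuously; if an equilibrating pair exists it is a stationary point, hence a global minimizer by convexity, so the minimum of $\Phi(DAE)$ is attained exactly on the equilibrating pairs.

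I do not anticipate a serious obstacle; the only points needing care are the reduction to positive diagonal matrices (handling the boundary where $\Phi=+\infty$) and the use of convexity of the \eqref{prob:equil} objective in $(u,v)$ to pass from "stationary point" to "global minimizer," which is what guarantees that equilibration and minimization of $\Phi$ single out precisely the same set of pairs $(D,E)$.
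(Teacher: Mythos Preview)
Your argument is correct and follows essentially the same route as the paper: both show that $\log\Phi(DAE)$ differs from the objective of problem~\eqref{prob:equil} (with $\alpha=\beta=1$) only by the constant $\log|\det A|$, so the two problems share the same minimizers, which by the already-stated optimality conditions are exactly the equilibrating pairs. Your version is in fact a bit more careful than the paper's, explicitly handling the reduction to positive diagonals, invoking convexity to pass from stationarity to global optimality, and treating the vacuous case where $A$ cannot be equilibrated.
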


\begin{proof}
We first rewrite problem (\ref{prob:equil}) in terms of $D$ and $E$ to obtain
\[
\begin{array}{ll}
\mbox{minimize} & (1/2)\|DAE\|_{F}^2 - \sum_{i=1}^n \log D_{ii}
-  \sum_{j=1}^n \log E_{jj} \\
\mbox{subject to} & \diag(D) > 0, \quad \diag(E) > 0, \quad
D,E \text{ diagonal},
\end{array}
\]
(Here we take $\alpha = \beta =1$, so the row and column norms are one.)
We can rewrite this problem as
\[
\begin{array}{ll}
\mbox{minimize} & (1/2)\|DAE\|_{F}^2 - \log \det\left((DAE)^T(DAE)\right)^{1/2} \\
\mbox{subject to} & \diag(D) > 0, \quad \diag(E) > 0, \quad
D, E \text{ diagonal},
\end{array}
\]
since the objective differs from the problem above
only by the constant $\log \det(A^TA)^{1/2}$.
Finally, taking the exponential of the objective,
we obtain the equivalent problem
\[
\begin{array}{ll}
\mbox{minimize} & \Phi(DAE) =\exp\left((1/2)\|DAE\|_{F}^2 \right)/
\det\left((DAE)^T(DAE)\right)^{1/2} \\
\mbox{subject to} & \diag(D) > 0, \quad \diag(E) > 0, \quad
D, E \text{ diagonal}.
\end{array}
\]
Thus diagonal (positive) $D$ and $E$ equilibrate $A$, with row and column
norms one, if and only if they minimize the objective of this problem.
\end{proof}

Theorem \ref{thm:sing-vals} links equilibration
with minimization of condition number because the function
$\Phi(DAE)$ gives an upper bound on $\kappa(DAE)$.

\begin{theorem}\label{thm:cond-ineq}
Let $U \in \reals^{n \times n}$ be nonsingular with singular values
$\sigma_1 \geq  \cdots \geq \sigma_n >0$ and condition number
$\kappa = \sigma_1/\sigma_n$. Then
\begin{equation}\label{ineq:cond}
2e^{-n/2} \Phi(U) \geq \kappa.
\end{equation}
Moreover this inequality is tight within a factor of $2$, \ie,
there exists $U$ with condition number $\kappa$ with
\begin{equation}\label{ineq:cond2}
2e^{-n/2} \Phi(U) \leq 2\kappa.
\end{equation}
\end{theorem}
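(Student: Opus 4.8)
The plan is to prove (\ref{ineq:cond}) by taking logarithms and reducing it to two elementary one-variable inequalities, and to prove the second assertion (\ref{ineq:cond2}) with an explicit diagonal example.

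\emph{For (\ref{ineq:cond}).} Since $\Phi(U) = \exp\bigl(\sum_{i=1}^n \sigma_i^2/2\bigr)/\prod_{i=1}^n \sigma_i$ and $\kappa = \sigma_1/\sigma_n$, taking logarithms shows that $2e^{-n/2}\Phi(U) \ge \kappa$ is equivalent to
\[
\log 2 - \frac n2 + \frac12\sum_{i=1}^n \sigma_i^2 - \sum_{i=1}^n \log\sigma_i - \log\sigma_1 + \log\sigma_n \;\ge\; 0 .
\]
I would assume $n \ge 2$ (for $n = 1$ we have $\kappa = 1$ and the claim is immediate). Writing $-n/2 = \sum_{i=1}^n (-1/2)$ and splitting off the indices $1$ and $n$, the left-hand side equals
\[
\log 2 + \Bigl(\tfrac12\sigma_1^2 - \tfrac12 - 2\log\sigma_1\Bigr) + \Bigl(\tfrac12\sigma_n^2 - \tfrac12\Bigr) + \sum_{i=2}^{n-1}\Bigl(\tfrac12\sigma_i^2 - \tfrac12 - \log\sigma_i\Bigr) .
\]
Then I would bound the three groups separately. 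Each summand of the last sum is nonnegative by the scalar inequality $\tfrac12 t^2 - \tfrac12 \ge \log t$ for $t > 0$ (equality at $t = 1$). The middle term satisfies $\tfrac12\sigma_n^2 - \tfrac12 \ge -\tfrac12$ because $\sigma_n^2 \ge 0$. For the first term, minimizing $t \mapsto \tfrac12 t^2 - 2\log t$ over $t > 0$ gives its value at $t = \sqrt2$, namely $1 - \log 2$, so $\tfrac12\sigma_1^2 - \tfrac12 - 2\log\sigma_1 \ge \tfrac12 - \log 2$. Adding the three bounds yields $\log 2 + (\tfrac12 - \log 2) + (-\tfrac12) + 0 = 0$, which is exactly the required inequality.

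\emph{For (\ref{ineq:cond2}).} I would take $U = \diag(1,\ldots,1,1/\kappa) \in \reals^{n \times n}$, which has singular values $1$ (with multiplicity $n-1$) and $1/\kappa$, hence condition number $\kappa$. Then $\|U\|_F^2 = n - 1 + 1/\kappa^2$ and $\det(U^TU)^{1/2} = 1/\kappa$, so $\Phi(U) = \kappa\exp\bigl((n-1+1/\kappa^2)/2\bigr)$ and
\[
2e^{-n/2}\Phi(U) = 2\kappa\exp\!\bigl((1/\kappa^2 - 1)/2\bigr) \le 2\kappa ,
\]
since $\kappa \ge 1$ makes the exponent nonpositive.

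The only real work is the bookkeeping in (\ref{ineq:cond}): one must notice that the $-n/2$ from the factor $e^{-n/2}$ is absorbed by distributing a $-\tfrac12$ to each index, that the resulting deficit $\tfrac12 - \log 2$ in the $\sigma_1$ term together with the $-\tfrac12$ in the $\sigma_n$ term is paid for exactly by the $\log 2$ from the factor $2$, and that the right scalar bound must be used at each index (the bound tight at $t = 1$ for the bulk singular values, the bound tight at $t = \sqrt2$ for $\sigma_1$, and merely $\sigma_n^2 \ge 0$ for $\sigma_n$). This same alignment shows the constant $2$ in (\ref{ineq:cond}) cannot be reduced: letting $\sigma_n \to 0^+$ with $\sigma_1 = \sqrt2$ and all other singular values equal to $1$, the ratio $2e^{-n/2}\Phi(U)/\kappa$ tends to $1$.
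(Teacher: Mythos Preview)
Your proof is correct but follows a genuinely different route from the paper's. The paper factors $\Phi(U)=\Psi(\sigma_1,\sigma_n)\prod_{i=2}^{n-1}\Gamma(\sigma_i)$ and treats the extreme singular values \emph{jointly}: it minimizes $\Psi(\sigma_1,\sigma_n)$ subject to $\sigma_1=\kappa\sigma_n$, obtaining the sharper intermediate bound $\Phi(U)\ge(e^{n/2}/2)(\kappa+1/\kappa)$, which is attained at $\sigma_1=\sqrt{2\kappa^2/(1+\kappa^2)}$, $\sigma_n=\sqrt{2/(1+\kappa^2)}$, $\sigma_i=1$ otherwise. Both inequalities of the theorem then follow from $\kappa\le\kappa+1/\kappa\le2\kappa$. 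You instead decouple $\sigma_1$ and $\sigma_n$ after taking logarithms, bounding the $\sigma_1$ term by its minimum at $\sigma_1=\sqrt2$ and the $\sigma_n$ term crudely by $\sigma_n^2\ge0$; this goes straight to (\ref{ineq:cond}) without passing through the sharp inequality. For (\ref{ineq:cond2}) your example $U=\diag(1,\ldots,1,1/\kappa)$ is simpler than the paper's, which reuses the equality case of its sharp bound. What the paper's approach buys is the exact inequality $2e^{-n/2}\Phi(U)\ge\kappa+1/\kappa$ with an explicit matrix achieving equality; what your approach buys is a shorter argument for the stated theorem and, as a bonus, your closing remark that the leading constant $2$ is asymptotically sharp as $\sigma_n\to0$.
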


\begin{proof}
We factor $\Phi$ into
\[
\Phi(U) = \Psi(\sigma_1,\sigma_n)
\prod_{i=2}^{n-1}\Gamma(\sigma_i),
\]
where
\[
\Psi(\sigma_1,\sigma_n) = \exp((\sigma_1^2 + \sigma_n^2)/2 )/(\sigma_1 \sigma_n),
\qquad
\Gamma(\sigma_i) = \exp(\sigma_i^2/2)/\sigma_i.
\]

We first relate $\Psi$ and the condition number, by minimizing
$\Psi(\sigma_1,\sigma_n)$ with $\sigma_1 = \kappa \sigma_n$ (\ie,
with condition number $\kappa$).
We must minimize over $\sigma_n$ the function
\[
\Psi(\kappa \sigma_n, \sigma_n)  = \frac{\exp (\sigma_n^2(1+\kappa^2)/2)}
{ \kappa \sigma_n^2}.
\]
With change of variable $z=\sigma_n^2$, this function is convex,
with minimizer $z= 2/(1+\kappa^2)$ and minimum value
$(e/2)(\kappa + 1/\kappa)$.
Therefore we have
\[
\Psi(\sigma_1,\sigma_n)  \geq (e/2)(\kappa + 1/\kappa).
\]


It is straightforward to show that $\Gamma(\sigma_i)$ is convex,
and minimized when $\sigma_i=1$.  Thus we have
$\Gamma(\sigma_i) \geq \Gamma(1) = e^{1/2}$.
We combine these results to obtain the inequality
\begin{equation}\label{ineq:cond3}
\Phi(U) \geq (e^{n/2}/2)(\kappa + 1/\kappa),
\end{equation}
which is sharp; indeed, it is tight when
\[
\sigma_1 = \left(\frac{2 \kappa^2}{1+\kappa^2}\right)^{1/2}, \qquad
\sigma_n = \left(\frac{2}{1+\kappa^2}\right)^{1/2},
\]
and $\sigma_i = 1$ for $i=2,\ldots, n-1$.

The inequality (\ref{ineq:cond3}) implies
inequality (\ref{ineq:cond}), since $\kappa + 1/\kappa \geq \kappa$.
With the values of $\sigma_i$ that make
(\ref{ineq:cond3}) tight, the inequality (\ref{ineq:cond2}) holds
because $\kappa + 1/\kappa \leq 2\kappa$.
\end{proof}

Theorems~\ref{thm:sing-vals} and~\ref{thm:cond-ineq}
show that equilibration is the same as
minimizing $\Phi(DAE)$ over diagonal $D$ and $E$, and that
$\Phi(DAE)$ is an upper bound
on $\kappa(DAE)$, the condition number of $DAE$.

\subsection{Regularized equilibration}

The equilibration problem, and its equivalent convex optimization problem
(\ref{prob:equil}), suffer from several flaws.
The first is that not all matrices can be equilibrated \cite{bradley2010algorithms}.
For example, if the nonzero matrix $A$ has a zero row or column, it cannot
be equilibrated.
As a less obvious example, a triangular matrix with unit diagonal cannot be
equilibrated.
When the matrix $A$ cannot be equilibrated,
the convex problem (\ref{prob:equil}) is unbounded \cite{fougner2015parameter}.

The second flaw is that even when the matrix $A$ can be equilibrated
problem (\ref{prob:equil}) does not have a unique solution.
Given a solution $(u^\star,v^\star)$ to problem (\ref{prob:equil}),
the point $(u^\star + \gamma, v^\star - \gamma)$ is a solution
for any $\gamma \in \reals$.
In other words, we can scale $D$ by $e^\gamma$ and $E$ by $e^{-\gamma}$
and still have $DAE$ equilibrated.
We would prefer to guarantee a solution where $D$ and $E$ have roughly
the same scale.
The final flaw is that in practice we do not want the entries of $D$
and $E$ to be extremely large or extremely small; we may have limits on
how much we are willing to scale the rows or columns.


We address these flaws by modifying the problem (\ref{prob:equil}),
adding regularization and box constraints,
and reframing the equilibration problem as
\begin{equation}\label{prob:reg-constr-equil}
\begin{array}{ll}
\mbox{minimize} & (1/2)
\sum_{i=1}^m\sum_{j=1}^{n} (A_{ij})^2e^{2u_i + 2v_j} - \alpha^2 \ones^Tu -
\beta^2 \ones^Tv + (\gamma/2)\left\|\left[\begin{array}{c}
u \\
v \end{array} \right] \right\|^2_2, \\
\mbox{subject to} & \|u\|_{\infty} \leq M, \quad
\|v\|_{\infty} \leq M,
\end{array}
\end{equation}
where $\gamma > 0$ is the regularization parameter
and the parameter $M > 0$ bounds the entries of $D$ and $E$ to lie
in the interval $[e^{-M}, e^M]$.
The additional regularization term penalizes large choices of
$u$ and $v$ (which correspond to large or small row and column scalings).
It also makes the objective strictly convex and bounded below, so
the modified problem (\ref{prob:reg-constr-equil}) always has a
unique solution $(u^\star,v^\star)$, even when $A$ cannot be equilibrated.
Assuming the constraints are not active at the solution we have
\[
\ones^Tu^\star = \ones^Tv^\star,
\]
which means that the optimal $D$ and $E$ have the same scale in the sense that
the product of their diagonal entries are equal:
\[
\prod_{i=1}^m D_{ii} = \prod_{j=1}^n E_{jj}.
\]

Problem (\ref{prob:reg-constr-equil}) is convex and can be solved
using a variety of methods.
Block alternating minimization over $u$ and $v$ can be used here,
as in the Sinkhorn-Knopp algorithm.
We minimize over $u$ (or equivalently $D$) by setting
\[
D_{ii} = \Pi_{[e^{-2M},e^{2M}]}\left(
2\alpha^2/\gamma -
W\left(2e^{2\alpha^2/\gamma}\sum_{j=1}^nA_{ij}^2E_{jj}^2 /\gamma \right)
\right)^{1/2}, \quad i=1,\ldots,m,
\]
where $W$ is the Lambert $W$ function \cite{CG:96}
and $\Pi_{[e^{-2M},e^{2M}]}$ denotes projection onto
the interval $[e^{-2M},e^{2M}]$.
We minimize over $v$ ($E$) by setting
\[
E_{jj} = \Pi_{[e^{-2M},e^{2M}]}\left(2\beta^2/\gamma -
W\left(2e^{2\beta^2/\gamma}\sum_{i=1}^mA_{ij}^2D_{ii}^2 /\gamma \right)
\right)^{1/2}, \quad j=1,\ldots,n.
\]
When $M=+\infty$, $m=n$, and $\alpha = \beta = 1$,
the $D$ and $E$ updates converge to the Sinkhorn-Knopp updates as $\gamma \to 0$
\cite{HH:08}.
This method works very well, but like the Sinkhorn-Knopp method requires
access to the individual entries of $A$, and so is not appropriate as a
matrix-free algorithm.

Of course, solving problem (\ref{prob:reg-constr-equil}) does not equilibrate
$A$ exactly; unless $\gamma =0$ and the constraints are not active,
its optimality conditions are not that $DAE$ is equilibrated.
We can make the equilibration more precise by decreasing the regularization
parameter $\gamma$ and increasing the scaling bound $M$.
But if we are using equilibration as a heuristic for reducing condition
number, approximate equilibration is more than sufficient.

\section{Stochastic method}\label{sec:sgd}

In this section we develop a method for solving
problem (\ref{prob:reg-constr-equil}) that is matrix-free, \ie, only accesses the
matrix $A$ by multiplying a vector by $A$ or by $A^T$.
(Of course we can find all the entries of $A$ by multiplying $A$ by
the unit vectors $e_i$, $i=1, \ldots, n$; then, we can use
the block minimization method described above to solve the problem.
But our hope is to solve the problem
with far fewer multiplications by $A$ or $A^T$.)

\subsection{Unbiased gradient estimate}

\paragraph{Gradient expression.}
Let $f(u,v)$ denote the objective function of problem (\ref{prob:reg-constr-equil}).
The gradient $\nabla_u f(u,v)$ is given by
\[
\nabla_{u} f(u,v) = |DAE|^2\ones - \alpha^2 + \gamma u.
\]
Similarly, the gradient $\nabla_v f(u,v)$ is given by
\[
\nabla_{v} f(u,v) = \left|EA^TD\right|^2\ones - \beta^2 + \gamma v.
\]
The first terms in these expressions,
$|DAE|^2\ones$ and $ \left|EA^TD\right|^2\ones$, are the row norms squared of the matrices
$DAE$ and $EA^TD$, respectively.  These are readily computed
if we have access to the entries of $A$; but in a matrix-free setting, where
we can only access $A$ by multiplying a vector by $A$ or $A^T$, it is difficult to
evaluate these row norms.
Instead, we will estimate them using a randomized method.

\paragraph{Estimating row norms squared.}
Given a matrix $B \in \reals^{m \times n}$,
we use the following approach to get an unbiased estimate $z$
of the row norms squared $|B|^2 \ones$.
We first sample a random vector
$s \in \reals^n$ whose entries $s_i \in \{-1,1\}$ are drawn independently
with identical distribution (IID), with probability one half for each outcome.
We then set $z = |Bs|^2$.
This technique is discussed in
\cite{bradley2010algorithms,Bekas:2007:EDM:1287840.1287969,doi:10.1080/03610919008812866}.

To see that $E[z] = |B|^2 \ones$, consider $(b^Ts)^2$,
where $b \in \reals^n$.
The expectation of $(b^Ts)^2$ is given by
\[
E[(b^Ts)^2] = \sum_{i=1}^nb_i^2E[s_i^2] + \sum_{i \neq j}b_ib_jE[s_is_j]
= \sum_{i=1}^nb_i^2.
\]
As long as the entries of $s$ are IID with mean $0$ and variance $1$,
we have $E[(b^Ts)^2] = \sum_{i=1}^nb_i^2$.
Drawing the entries of $s$ from $\{-1,1\}$, however,
minimizes the variance of $(b^Ts)^2$.


\subsection{Projected stochastic gradient}

\paragraph{Method.}
We follow the projected stochastic gradient method described in
\cite{LSB:02} and \cite[Chap.~6]{SB:15},
which solves convex optimization problems of the form
\begin{equation}\label{prob:standard-form}
\begin{array}{ll}
\mbox{minimize} & f(x) \\
\mbox{subject to} & x \in C,
\end{array}
\end{equation}
where $x \in \reals^n$ is the optimization variable,
$f : \reals^n \to \reals$ is a strongly convex differentiable function,
and $C$ is a convex set, using only an oracle that gives an unbiased estimate
of $\nabla f$, and projection onto $C$.

Specifically, we cannot evaluate $f(x)$ or $\nabla f(x)$,
but we can evaluate a function $g(x,\omega)$ and sample from a distribution $\Omega$
such that $E_{\omega \sim \Omega}g(x,\omega) = \nabla f(x)$.
Let $\mu$ be the strong convexity constant for $f$ and
$\Pi_C : \reals^n \to \reals^n$ denote the Euclidean projection onto $C$.
Then the method consists of $T$ iterations of the update
\[
\begin{split}
x^{t} &:=  \Pi_C\left(x^{t-1} - \eta_t g(x^{t-1},\omega) \right),
\end{split}
\]
where $\eta_t = 2/(\mu(t+1))$ and $\omega$ is sampled from $\Omega$.
The final approximate solution $\bar x$ is given by the weighted average
\[
\bar x = \sum_{t=0}^T\frac{2(t+1)}{(T+1)(T+2)}x^{t}.
\]
Algorithm (\ref{alg:psgd}) gives the full projected stochastic gradient
method in the context of problem (\ref{prob:reg-constr-equil}).
Recall that the objective of problem (\ref{prob:reg-constr-equil})
is strongly convex with strong convexity parameter $\gamma$.

\begin{algorithm}
\caption{Projected stochastic gradient method for problem (\ref{prob:reg-constr-equil}).}\label{alg:psgd}
\begin{algorithmic}
\Require{$u^{0}=0$, $v^{0} = 0$, $\bar{u}=0$, $\bar{v}=0$, and $\alpha, \beta, \gamma, M > 0$.}
\Statex
\For{$t=1, 2, \ldots, T$}
   \State $D \gets \diag(e^{u_1^{t-1}},\ldots, e^{u_m^{t-1}}), \qquad E \gets \diag(e^{v_1^{t-1}},\ldots, e^{v_n^{t-1}}).$
   \State Draw entries of $s \in \reals^n$ and $w \in \reals^m$ IID uniform from $\{-1,1\}$.
   \State $u^{t} \gets \Pi_{[-M,M]^m}\left(
    u^{t-1} - 2\left(
   |DAEs|^2 - \alpha^2 \ones +  \gamma u^{t-1} \right)/(\gamma(t+1)) \right).$
   \State $v^{t} \gets \Pi_{[-M,M]^n}\left(
   v^{t-1} - 2\left(
   |EA^TDw|^2 - \beta^2 \ones + \gamma v^{t-1} \right)/(\gamma(t+1)) \right).$
   \State $\bar{u} \gets 2u^{t}/(t+2) + t\bar{u}/(t+2).$
   \State $\bar{v} \gets 2v^{t}/(t+2) + t\bar{v}/(t+2).$
\EndFor
\Statex
\Ensure{$D = \diag(e^{\bar{u}_1},\ldots, e^{\bar{u}_m})$
and $E = \diag(e^{\bar{v}_1},\ldots, e^{\bar{v}_n}).$}
\end{algorithmic}
\end{algorithm}

\paragraph{Convergence rate.}
Algorithm (\ref{alg:psgd}) converges in expectation to the optimal value of
problem (\ref{prob:reg-constr-equil}) with rate $O(1/t)$ \cite{LSB:02}.
Let $f(u,v) : \reals^m \times \reals^n \to \reals$ denote the objective
of problem (\ref{prob:reg-constr-equil}),
let $(u^\star, v^\star)$ denote the problem solution,
and let $\tilde g(u,v,s,w) : \reals^m \times \reals^n \times \{-1,1\}^n \times \{-1,1\}^m \to \reals^{m+n}$ be the estimate of $\nabla f(u,v)$ given by
\[
\tilde g(u,v,s,w) = \left[\begin{array}{c}
|DAEs|^2 - \alpha^2 \ones +  \gamma u \\
|EA^TDw|^2 - \beta^2 \ones + \gamma v
\end{array}\right].
\]
Then after $T$ iterations of the algorithm we have
\[
E_{(u^T,v^T),\ldots,(u^1,v^1)} f(\bar{u}, \bar{v}) - f(u^\star, v^\star) \leq \frac{C}{\mu(T+1)},
\]
where $C$ is a constant bounded above by
\[
C \leq \max_{(u,v) \in [-M,M]^{m\times n}}2E_{s,w} \|\tilde{g}(u,v,s,w)\|^2_2.
\]
In the expectation $s$ and $w$ are random variables with entries
drawn IID uniform from $\{-1,1\}$.

We can make the bound more explicit.
It is straightforward to show the equality
\[
E_{s,w} \|\tilde{g}(u,v,s,w)\|^2_2 = \|\nabla f(u,v)\|^2_2
+ 3\ones^T\left| \left|
\left[\begin{array}{c}
DAE \\
EA^TD
\end{array}\right]
\right|^2\ones \right|^2 - 4\ones^T|DAE|^4 \ones,
\]
and the inequality
\[
\max_{(u,v) \in [-M,M]^{m\times n}} \|\nabla f(u,v)\|^2_2 \leq
\|\nabla f(M1,M1)\|^2_2 + 4\gamma M (\alpha^2 m + \beta^2 n).
\]
We combine these two results to obtain the bound
\[
C/2 \leq \|\nabla f(M1,M1)\|^2_2  + 4\gamma M (\alpha^2 m + \beta^2 n)
+ e^{8M}\left(
3\ones^T\left| \left|
\left[\begin{array}{c}
A \\
A^T
\end{array}\right]
\right|^2\ones \right|^2 - 4\ones^T|A|^4 \ones \right).
\]

Our bound on $C$ is quite large.
A more thorough analysis could improve the bound by considering the relative sizes of the different
parameters and entries of $A$.
For instance, it is straightforward to show that for $t=1,\ldots,T$ we have
\[
u^t_i \leq \alpha^2/\gamma, \quad i=1,\ldots,m, \qquad
v^t_j \leq \beta^2/\gamma, \quad j=1,\ldots,n,
\]
which gives a tighter bound if $\alpha^2/\gamma < M$ or $\beta^2/\gamma < M$.
In any case, we find that in practice no more than tens
of iterations are required to reach an approximate solution.



\section{Numerical experiments}\label{sec:num-exp}
We evaluated algorithm (\ref{alg:psgd}) on many different matrices $A$.
We only describe the results for a single numerical experiment,
but we obtained similar results for our other experiments.
For our numerical experiment we generated a sparse matrix
$\hat{A} \in \reals^{m \times n}$,
with $m = 2\times 10^4$ and $n=10^4$,
with 1\% of the entries chosen uniformly at random to be nonzero,
and nonzero entries drawn IID from a standard normal distribution.
We next generated vectors $\hat{u} \in \reals^m$ and
$\hat{v} \in \reals^n$ with entries drawn IID from a normal distribution with
mean 1 and variance 1.
We set the final matrix $A$ to be
\[
A = \diag\left(e^{\hat{u}_1},\ldots,e^{\hat{u}_m}\right)
\hat{A}
\diag\left(e^{\hat{v}_1},\ldots,e^{\hat{v}_n}\right).
\]

We ran algorithm (\ref{alg:psgd}) for 1000 iteratons to obtain
an approximate solution $f(\bar{u},\bar{v})$.
We used the parameters $\alpha = (\frac{n}{m})^{1/4}$,
$\beta = (\frac{m}{n})^{1/4}$, $\gamma = 10^{-1}$ and $M = \log(10^4)$.
We obtained the exact solution $p^\star$ to high accuracy
using Newton's method with backtracking line search.
(Newton's method does not account for constraints,
but we verified that the constraints were not active at the solution.)

Figure \ref{fig:equil} plots the relative optimality gap
$(f(\bar{u},\bar{v}) - p^\star)/f(0,0)$ and the RMS equilibration error,
\[
\frac{1}{\sqrt{m+n}}\left( \sum_{i=1}^m\left( \sqrt{e_i^T|DAE|^2\ones} - \alpha\right)^2
+ \sum_{j=1}^n\left( \sqrt{e_j^T\left|EA^TD \right|^2\ones} - \beta\right)^2 \right)^{1/2},
\]
versus iteration.
The RMS error shows how close $DAE$ is to equilibrated;
we do not expect it to converge to zero because of the regularization.

The objective value and RMS error decrease quickly
for the first few iterations, with oscillations,
and then decrease smoothly but more slowly.
The slopes of the lines show the convergence rate.
The least-squares linear fit for the optimality gap has slope $-2.0$,
which indicates that the convergence was (much) faster than the theoretical
upper bound $1/t$.

\begin{figure}
\begin{center}
\includegraphics[width=0.7\textwidth]{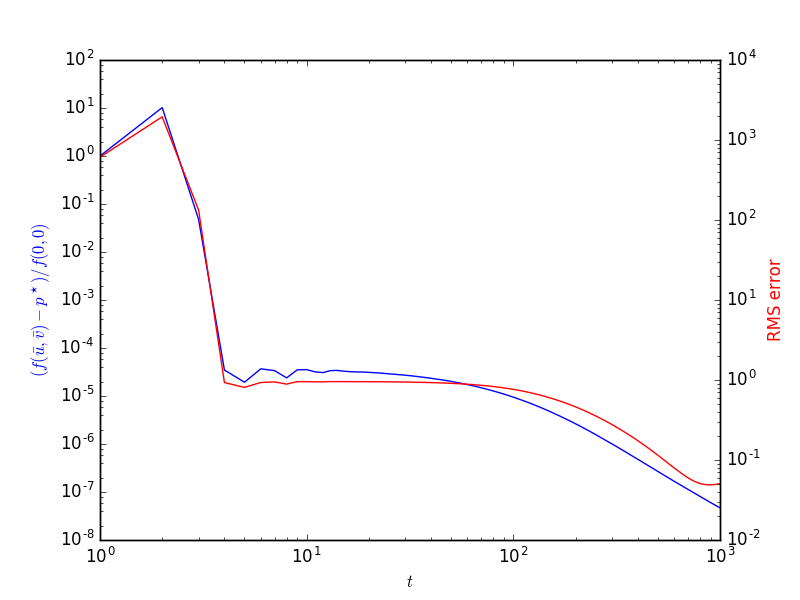}
\end{center}
\caption{Problem (\ref{prob:reg-constr-equil}) optimality gap and RMS error versus iterations $t$.
}\label{fig:equil}
\end{figure}

Figure \ref{fig:cond} shows the condition number of $DAE$ versus iteration.
While equilibration merely minimizes an upper bound on the condition number,
in this case the condition number corresponded quite closely with
the objective of problem (\ref{prob:reg-constr-equil}).
The plot shows that after 4 iterations $\kappa(DAE)$ is back to the
original condition number $\kappa(A) = 10^4$.
After 100 iterations the condition number is reduced by $200 \times$,
and it continues to decrease with further iterations.

\begin{figure}
\begin{center}
\includegraphics[width=0.7\textwidth]{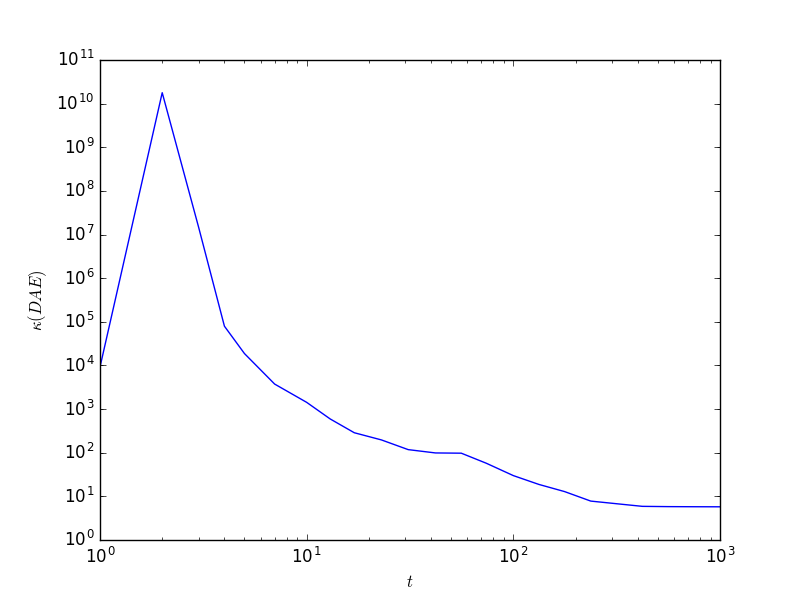}
\end{center}
\caption{Condition number of $DAE$ versus iterations $t$.
}\label{fig:cond}
\end{figure}

\section{Applications}

\subsection{LSQR}
The LSQR algorithm \cite{PaS:82} is an iterative matrix-free method for solving the
linear system $Ax = b$,
where $x\in \reals^n$, $A \in \reals^{n \times n}$
and $b \in \reals^n$.
Each iteration of LSQR involves one multiplication by $A$ and one by $A^T$.
LSQR is equivalent in exact arithmetic to applying the conjugate
gradients method \cite{HeS:52} to the normal equations $A^TAx = A^Tb$,
but in practice has better numerical properties.
An upper bound on the number of iterations needed by LSQR to
achieve a given accuracy grows with $\kappa(A)$ \cite[Chap.~5]{NW:06}.
Thus decreasing the condition number of $A$ via equilibration
can accelerate the convergence of LSQR.
(Since LSQR is equivalent to conjugate gradients applied to the normal equations,
it computes the exact solution in $n$ iterations, at least in exact arithmetic.
But with numerical roundoff error this does not occur.)

We use equilibration as a preconditioner by solving the
linear system $(DAE)\bar{x} = Db$ with LSQR instead of $Ax = b$; we then
recover $x$ from $\bar{x}$ via $x = E \bar{x}$.
We measure the accuracy of an approximate solution $\bar{x}$ by the
residual $\|Ax - b\|_2$
rather than by residual $\|DAE\bar{x} - Db\|_2$ of the preconditioned system,
since our goal is to solve the original system $Ax = b$.

We compared the convergence rate of LSQR with and
without equilibration.
We generated the matrix $\hat{A} \in \reals^{n \times n}$ as in
\S\ref{sec:num-exp}, with $n=10^4$.
We choose $b \in \reals^n$ by first generating $x^\star \in \reals^n$ by drawing entries IID
from a standard normal distribution, and then
setting $b = Ax^\star$.

We generated equilibrated matrices $D_{10}AE_{10}$, $D_{30}AE_{30}$,
$D_{100}AE_{100}$, and $D_{300}AE_{300}$
by running algorithm (\ref{alg:psgd}) for 10, 30, 100, and 300 iterations,
respectively.
We used the parameters $\alpha = (n/m)^{1/4}$,
$\beta = (m/n)^{1/4}$, $\gamma = 10^{-1}$ and $M = \log(10^4)$.
Note that the cost of equlibration iterations is the same as the cost
of LSQR iterations, since each involves one multiply by $A$ and one by $A^T$.

Figure \ref{fig:lsqr} shows the results of running LSQR with and without equilibration,
from the initial iterate $x^{0} = 0$.  We show the relative residual
$\|Ax^{t} - b\|_2/\|b\|_2$ versus iterations,
counting the equilibration iterations,
which can be seen as the original flat portions at the beginning of each curve.
We can see that to achieve relative accuracy $10^{-4}$, LSQR without
preconditioning requires around $10^4$ iterations; with preconditioning
with 30 or more iterations of equilibration, it requires more than
$10\times $ fewer iterations.
We can see that higher accuracy justifies more equilibration iterations, but
that the choice of just $30$ equilibration iterations does very well.
We can see that 10 iterations of equilibration is too few,
and only improves LSQR convergence a small amount.

\begin{figure}
\begin{center}
\includegraphics[width=0.7\textwidth]{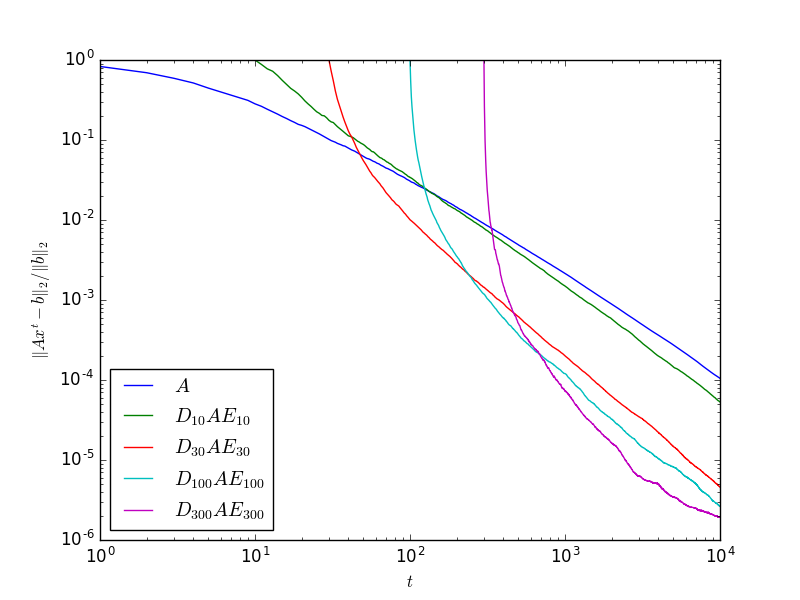}
\end{center}
\caption{Residual versus iterations $t$ for LSQR.
}\label{fig:lsqr}
\end{figure}

\subsection{Chambolle-Cremers-Pock}
The Chambolle-Cremers-Pock (CCP) algorithm \cite{CP:11,PCBC:09}
is an iterative method for solving convex optimization problems of the
form
\[
\begin{array}{ll}
\mbox{minimize} & f(x) + g(Ax),
\end{array}
\]
where $x \in \reals^n$ is the variable,
$A \in \reals^{m \times n}$ is problem data,
and $f$ and $g$ are convex functions.
Each iteration of CCP requires one multiplication by $A$ and one by $A^T$.
Chambolle and Pock do not show a dependence on $\kappa(A)$ in their
analysis of the algorithm convergence rate,
but we nonetheless might expect that equilibration will
accelerate convergence.

We compared the convergence rate of CCP with and
without equilibration on the Lasso problem \cite[\S 3.4]{friedman2001elements}
\[
\begin{array}{ll}
\mbox{minimize} & \|Ax - b\|^2_2/\sqrt{\lambda} + \sqrt{\lambda}\|x\|_1.
\end{array}
\]
We generated the matrix $A \in \reals^{m \times n}$ as in
\S\ref{sec:num-exp}, with $m=10^4$ and $n=2 \times 10^4$.
We generated $b \in \reals^m$ by first generating $\hat{x} \in \reals^n$
by choosing $n/10$ entries uniformly at random to be nonzero
and drawing those entries IID from a standard normal distribution.
We then set $b = A\hat{x} + \nu$,
where the entries of $\nu \in \reals^m$ were drawn IID from
a standard normal distribution.
We set $\lambda = 10^{-3} \|A^Tb\|_\infty$ and found the optimal value $p^\star$
for the Lasso problem using CVXPY \cite{cvxpy_paper} and GUROBI \cite{gurobi}.

We generated equilibrated matrices $D_{10}AE_{10}$, $D_{30}AE_{30}$,
$D_{100}AE_{100}$, and $D_{300}AE_{300}$
by running algorithm (\ref{alg:psgd}) for 10, 30, 100, and 300 iterations,
respectively.
We used the parameters $\alpha = (n/m)^{1/4}$,
$\beta = (m/n)^{1/4}$, $\gamma = 10^{-1}$ and $M = \log(10^4)$.

Figure \ref{fig:ccp} shows the results of running CCP with and without equilibration.
We used the parameters $\tau = \sigma = 0.9/\|D_kAE_k\|_2$ and $\theta = 1$
and set all initial iterates to 0.
We show the relative optimality gap $(f(x^{t}) - p^\star)/f(0)$ versus iterations,
counting the equilibration iterations,
which can be seen as the original flat portions at the beginning of each curve.
We can see that to achieve relative accuracy $10^{-6}$, CCP without
preconditioning requires around $1000$ iterations; with preconditioning
with 100 iterations of equilibration, it requires more than
$4\times $ fewer iterations.
CCP converges to a highly accurate solution with
just 100 equilibration iterations,
so additional equilibration iterations are unnecessary.
We can see that 10 and 30 iterations of equilibration are too few,
and do not improve CCP's convergence.

\begin{figure}
\begin{center}
\includegraphics[width=0.7\textwidth]{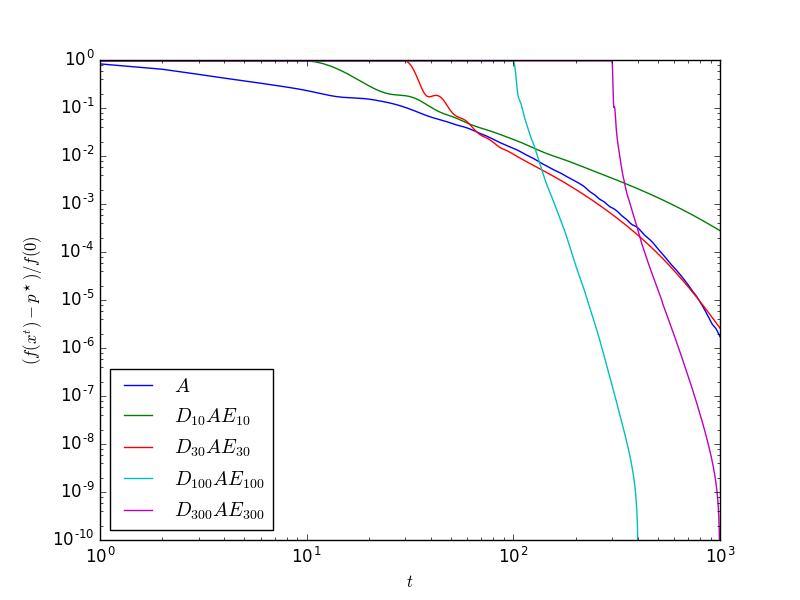}
\end{center}
\caption{Optimality gap versus iterations $t$ for CCP.
}\label{fig:ccp}
\end{figure}

\section{Variants}

In this section we discuss several variants of the equilibration
problem that can also be solved in a matrix-free manner.

\paragraph{Symmetric equilibration.}
When equilibrating a symmetric matrix $A \in \reals^{n \times n}$,
we often want the equilibrated matrix $DAE$ to also be symmetric.
For example, to use equilibration as a preconditioner for
the conjugate gradients method, $DAE$ must be symmetric \cite{HeS:52}.
We make $DAE$ symmetric by setting $D=E$.

Symmetric equilibration can be posed as the convex
optimization problem
\begin{equation}\label{prob:sym-equil}
\begin{array}{ll}
\mbox{minimize} & (1/4)
\sum_{i=1}^n\sum_{j=1}^{n} (A_{ij})^2e^{2u_i + 2u_j} - \alpha^2 \ones^Tu,
\end{array}
\end{equation}
where $u \in \reals^n$ is the optimization variable and $\alpha > 0$
is the desired value of the row and column norms.
We approximately solve problem (\ref{prob:sym-equil}) by adding
regularization and box constraints as in problem
(\ref{prob:reg-constr-equil})
and then applying algorithm (\ref{alg:sym-psgd}),
a simple modification of algorithm (\ref{alg:psgd})
with the same convergence guarantees.

\begin{algorithm}
\caption{Projected stochastic gradient method for symmetric equilibration.}\label{alg:sym-psgd}
\begin{algorithmic}
\Require{$u^{0}=0$, $\bar{u}=0$, and $\alpha, \gamma, M > 0$.}
\Statex
\For{$t=1, 2, \ldots, T$}
   \State $D \gets \diag(e^{u_1^{t-1}},\ldots, e^{u_n^{t-1}}).$
   \State Draw entries of $s \in \reals^n$ IID uniform from $\{-1,1\}$.
   \State $u^{t} \gets \Pi_{[-M,M]^n}\left(
    u^{t-1} - 2\left(
   |DADs|^2 - \alpha^2 1 +  \gamma u^{t-1} \right)/(\gamma(t+1)) \right).$
   \State $\bar{u} \gets 2u^{t}/(t+2) + t\bar{u}/(t+2).$
\EndFor
\Statex
\Ensure{$D = \diag(e^{\bar{u}_1},\ldots, e^{\bar{u}_n})$.}
\end{algorithmic}
\end{algorithm}

\paragraph{Varying row and column norms.}
In standard equilibration we want all the row norms of $DAE$ to be
the same and all the column norms to be the same.
We might instead want the row and column norms to equal known
vectors $r \in \reals^m$ and $c \in \reals^n$, respectively.
The vectors must satisfy $r^Tr = c^Tc$.

Equilibration with varying row and column norms can be posed as
the convex optimization problem
\begin{equation}\label{prob:equil-vary-norms}
\begin{array}{ll}
\mbox{minimize} & (1/2)
\sum_{i=1}^m\sum_{j=1}^{n} (A_{ij})^2e^{2u_i + 2v_j} - r^Tu - c^Tv,
\end{array}
\end{equation}
where as usual $u \in \reals^m$ and $v \in \reals^n$ are the optimization variables.
We approximately solve problem (\ref{prob:equil-vary-norms}) by adding
regularization and box constraints as in problem
(\ref{prob:reg-constr-equil})
and then applying algorithm (\ref{alg:psgd}) with the appropriate
modification to the gradient estimate.

\paragraph{Block equilibration.}
A common constraint when using equilibration as a preconditioner
is that the diagonal entries of $D$ and $E$ are divided into blocks
that all must have the same value.
For example, suppose we have a cone program
\[
\begin{array}{ll}
\mbox{minimize}   & c^Tx \\
\mbox{subject to} & Ax + b \in \K,
\end{array}
\]
where $x \in \reals^n$ is the optimization variable,
$c \in \reals^n$, $b \in \reals^m$, and $A \in \reals^{m\times n}$
are problem data,
and $\K = \K_1 \times \cdots \times \K_\ell$ is a product of
convex cones.

If we equilibrate $A$ we must ensure that $D\K = \K$.
Let $m_i$ be the dimension of cone $\K_i$.
A simple sufficient condition for $D\K = \K$ is that $D$ have the form
\begin{equation}\label{eq:D-form}
D = \diag(e^{u_1} I_{m_1}, \ldots, e^{u_p}I_{m_p}),
\end{equation}
where $u \in \reals^p$ and $I_{m_i}$ is the $m_i$-by-$m_i$ identity matrix.
Given the constraint on $D$, we cannot ensure that each row of $DAE$
has norm $\alpha$.
Instead we view each block of $m_i$ rows as a single vector
and require that the vector have norm $\sqrt{m_i}\alpha$.

In the full block equilibration problem we also require that $E$ have the form
\begin{equation}\label{eq:E-form}
E = \diag(e^{v_1} I_{n_1}, \ldots, e^{v_q}I_{n_q}),
\end{equation}
where $v \in \reals^q$ and $I_{n_j}$ is the $n_j$-by-$n_j$ identity matrix.
Again, we view each block of $n_j$ columns as a single vector
and require that the vector have norm $\sqrt{n_j}\beta$.

Block equilibration can be posed as the convex optimization problem
\begin{equation}\label{prob:block-equil}
\begin{array}{ll}
\mbox{minimize} & (1/2)
\ones^T|DAE|^2 \ones
- \alpha^2u^T\left[\begin{array}{c}
m_1 \\
\vdots \\
m_p \end{array}\right]
- \beta^2v^T\left[\begin{array}{c}
n_1 \\
\vdots \\
n_q \end{array}\right],
\end{array}
\end{equation}
where $D$ and $E$ are defined as in equations (\ref{eq:D-form}) and
(\ref{eq:E-form}).
We approximately solve problem (\ref{prob:block-equil}) by adding
regularization and box constraints as in problem
(\ref{prob:reg-constr-equil})
and then applying algorithm (\ref{alg:psgd}) with the appropriate
modification to the gradient estimate.
Our stochastic matrix-free block equilibration method is used in
the matrix-free versions of the cone solvers SCS \cite{SCSpaper} and
POGS \cite{fougner2015parameter} described in \cite{DB:ICCV,DB:15}.

\paragraph{Tensor equilibration.} We describe here the case of 3-tensors;
the generalization to higher order tensors is clear.
We are given a $3$-dimensional array $A \in \reals^{m \times n \times p}$,
and seek coordinate scalings
$d \in \reals^{m}, e \in \reals^{n}, f \in \reals^{p}$
for which
\[
\begin{array}{cc}
\left( \sum_{j=1}^{n}\sum_{k=1}^p
A_{ijk}^2 d_i^2 e_j^2 f_k^2 \right)^{1/2}  = \alpha,
\quad &i=1,\ldots,m \\
\left( \sum_{i=1}^{m}\sum_{k=1}^p
A_{ijk}^2 d_i^2 e_j^2 f_k^2 \right)^{1/2}  = \beta,
\quad &j=1,\ldots,n \\
\left( \sum_{i=1}^{m}\sum_{j=1}^{n}
A_{ijk}^2 d_i^2 e_j^2 f_k^2 \right)^{1/2}  = \gamma,
\quad &k=1,\ldots,p.
\end{array}
\]
Here $\alpha, \beta, \gamma > 0$ are constants that satisfy
$m\alpha^2=n\beta^2= p \gamma^2$.

Tensor equilibration can be posed as the convex optimization problem
\begin{equation}\label{prob:tensor}
\begin{array}{ll}
\mbox{minimize} & (1/2)
\sum_{i=1}^m\sum_{j=1}^{n}\sum_{k=1}^p
(A_{ijk}^2)e^{2(u_i + v_j + w_k)} -
\alpha^2\ones^Tu - \beta^2 \ones^Tv - \gamma^2 \ones^Tw,
\end{array}
\end{equation}
where $u \in \reals^m$, $v \in \reals^n$, and $w \in \reals^p$ are the optimization variables.
We can solve problem (\ref{prob:tensor}) using a simple variant of algorithm (\ref{alg:psgd})
that only interacts with the array $A$ via the matrix-to-vector operations
\[
\begin{array}{ll}
X &\to \sum_{i=1}^{m}\sum_{j=1}^{n}
A_{ijk}X_{ij} \\
Y &\to \sum_{i=1}^{m}\sum_{k=1}^p
A_{ijk}Y_{ik} \\
Z &\to \sum_{j=1}^{n}\sum_{k=1}^p
A_{ijk}Z_{jk}.
\end{array}
\]

\section*{Acknowledgments}
The authors thank Reza Takapoui for helpful comments and pointers.
This material is based upon work supported by the
National Science Foundation Graduate
Research Fellowship under Grant No. DGE-114747
and by the DARPA X-DATA and SIMPLEX programs.

\newpage
\bibliography{mf_equil}

\end{document}